\numberwithin{equation}{section} %% Comment out for sequentially-numbered
\numberwithin{figure}{section} %% Comment out for sequentially-numbered
  \theoremstyle{plain}
  \newtheorem{thm}{Theorem}[section]
  \theoremstyle{plain}
  \theoremstyle{plain}
  \theoremstyle{Remark}
  \newtheorem{rem}[thm]{Remark}
  \theoremstyle{remark}
  \theoremstyle{plain}
  \newtheorem{lem}[thm]{Lemma}
\def\<{{\langle }}
\def\>{{\rangle }}
\def\<{{\langle }}
\def\>{{\rangle }}
\begin{document}

\title[spectrum rotational cmc]{Spectrum of the Laplacian and the Jacobi operator on rotational cmc hypersurfaces of spheres}

\author{ Oscar M. Perdomo }

\date{\today}

%\curraddr{Department of Mathematics\\
%Central Connecticut State University\\
%New Britain, CT 06050\\
%}

%\email{ perdomoosm@ccsu.edu}

\subjclass[2000]{58E12, 58E20, 53C42, 53C43}
\subjclass[2000]{58E12, 58E20, 53C42, 53C43}

\maketitle

%ABSTRACT

\begin{abstract}
Let $M\subset \mathbb{S}^{n+1}\subset\mathbb{R}^{n+2}$ be a compact cmc rotational hypersurface of the $(n+1)$-dimensional Euclidean unit sphere.  Denote by $|A|^2$ the square of the norm of the second fundamental form and $J(f)=-\Delta f-nf-|A|^2f$ the stability or Jacobi operator.  In this paper we compute the spectra of their Laplace and Jacobi operators in terms of eigenvalues of second order Hill's equations. 
 For the minimal rotational examples, we prove that the stability index --the numbers of negative eigenvalues of the Jacobi operator counted with multiplicity -- is greater than $3 n+4$ and we also prove that there are at least 2 positive eigenvalues of the Laplacian of $M$ smaller than $n$. When $H$ is not zero, we have that
every non-flat CMC rotational immersion is generated by rotating a planar profile curve along a geodesic called the axis of rotation. Let $m$ be the number of points where the maximal distance from this profile curve to the origin is achieved (we assume that the coordinates of the plane containing the profile curve has been set up so that the axis of rotation goes through the origin). Let $l$ be be the wrapping number of the profile curve. We show that the number of negative eigenvalues of the operator  $J$ counted with multiplicity is at least $(2l-1)n+(2m-1)$. This result was proven for the case $n=2$ by Rossman and Sultana. They called $m$ the number of bulges or the number of necks. We will slightly change the definition of $l$  to include immersed examples that contain the axis of rotation.
\end{abstract}

%INTRODUCTION
\section{Introduction}

Rotational constant mean curvature hypersurfaces of spheres provide a variety of examples that can be used to understand the nature of cmc hypersurfaces in general. In this paper we derive a formula for the Laplacian on these hypersurfaces that allows us to compute the spectra of their Jacobi and Laplace operators to any desired degree of accuracy. Since we are using minimal hypersurfaces to motivate our results, here in the introduction we denote by $\mathcal{M}^n$ any minimal, not necessarily rotational, compact $n$-dimensional minimal hypersurface of the sphere. The spectra of the Laplacian and the stability operators on compact minimal hypersurfaces $\mathcal{M}^n\subset \mathbb{S}^{n+1}$ have been one of the central topics in differential geometry. Let us denote by $\lambda_1(\mathcal{M}^n)$, the first nonzero eigenvalue of the Laplace operator of $\mathcal{M}$. For minimal hypersurfaces of spheres, the spectrum of the Laplacian is known only when $\mathcal{M}^n$ is an Euclidean sphere,  $\mathcal{M}^n$ is the product of Euclidean spheres, or $\mathcal{M}^n$ is a cubic isoparametric hypersurface, \cite{B1}, \cite{B2}. It is known that for any minimal hypersurface of the sphere, $n$ is one of the eigenvalues of the Laplacian. Yau has conjectured that if $\mathcal{M}^n$ is compact and embedded, then $\lambda_1(\mathcal{M}^n)=n$,  \cite{Y}. A positive partial result of this conjecture was given by Choi and Wang \cite{CW}. They showed that  if $\mathcal{M}^n$ is compact and embedded, then $\lambda_1(\mathcal{M}^n)\ge \frac{n}{2}$. In this paper we prove that if $\mathcal{M}^n$ is rotational, then $\lambda_1(\mathcal{M}^n)<n$. Moreover we show that there are at least two positive eigenvalues of the Laplace operator smaller $n$.
Regarding the spectrum of the Jacobi operator, we have that the number of negative eigenvalues of the operator $J$ counted with multiplicity is known as the stability index of $\mathcal{M}^n$ and it is denoted as $\mathrm{ind}(\mathcal{M}^n)$. It has been conjectured that if $\mathcal{M}^n$ is not an Euclidean sphere, then $\mathrm{ind}(\mathcal{M}^n)=n+3$ implies that $\mathcal{M}^n$ is a product of Euclidean spheres.  This conjecture was proven by Urbano, \cite{U}, when $n=2$. For general $n$ only partial results are known. In \cite{P2}, the author shows that if $\mathrm{ind}(\mathcal{M}^n)=n+3$, then $\int_M|A|^2\le \int_M(n-1)$ with equality only if $\mathcal{M}^n$ is a product of Euclidean spheres. On the other hand in \cite{P4}, the author shows that if  $\mathcal{M}^n$ is rotational, then,  $\int_M|A|^2\le \int_M(n-1)$. Therefore, it may seem that the rotational minimal hypersurfaces are good candidates for a counterexample of the conjecture. This is not the case. Due to their symmetries, their stability index must be greater than $n+3$. See \cite{P1}. In this paper we show that there is a big jump in the stability index among the minimal rotational examples. We prove that if $\mathcal{M}^n$  is not a product of spheres then  $\mathrm{ind}(\mathcal{M}^n)\ge 3n+5$.  Some other partial results on this conjecture are found in \cite{S}, \cite{P3}, \cite{SA}. One of the most important applications of this type of estimates on the stability index was given  by Marques and Neves, \cite{MN}, where, among other tools, they used Urbano's result to prove Willmore's conjecture. 

To illustrate the way we can use our results to estimate the eigenvalues, we pick a $3$-dimensional rotational minimal hypersurface in $\mathbb{S}^4$ and prove that the first three eigenvalues of the Laplace operator are: 0, a number near $0.4404$ with multiplicity 2, and 3 with multiplicity 5. We also show that the negative eigenvalues of the Jacobi operator are: a number near $-8.6534$ with multiplicity 1, a number near $-8.52078$ with multiplicity 2, $-3$ with multiplicity 5, a number near  -2.5596 with multiplicity 6, and  a number number near $-1.17496$ with multiplicity 1. The stability index of this hypersurface is thus 15.

When $H$ is not zero, we prove (See Theorem \ref{indexcmc}) a lower bound for the number of negative eigenvalues of the Jacobi operator that generalizes to any dimension the result on the morse index of constant mean curvature tori of revolution in the 3-sphere proven by Rossman and Sultana in \cite{RS}.

We will be using the {\it Oscillation Theorem} for the periodic problem on the Hill's equation (a proof can be found in \cite{MW}). 

\begin{thm}\label{ot}
Consider the  differential equation 

\begin{eqnarray}\label{theode}
z^{\prime\prime}(t)+(\lambda+Q(t))z(t)=0
\end{eqnarray}

where $Q$ is a smooth $T$-periodic function. For any $\lambda$ let us define 
 
$$\delta(\lambda):= z_1(T,\lambda)+z_2^\prime(T,\lambda)$$
 
where $z_1(t,\lambda)$ and $z_2(t,\lambda)$ are solutions of (\ref{theode}) such that $z_1(0,\lambda)=1$, $z_1^\prime(0,\lambda)=0$ and $z_2(0,\lambda)=0$, $z_2^\prime(0,\lambda)=1$. There exists an increasing infinite sequence of real numbers $\lambda_1,\lambda_2\dots$ such the differential equation (\ref{theode}) has a $T$-periodic solution if and only if $\lambda=\lambda_j$. Moreover the $\lambda_j$ are the roots of the equation $\delta(\lambda)=2$. $\delta$ is called the discriminant function of the operator $K[z]=z^{\prime\prime}(t)+Q(t)z(t)$.
\end{thm}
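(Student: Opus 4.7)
The plan is to recast the existence of a $T$-periodic solution of \eqref{theode} as an eigenvalue condition on the monodromy matrix, and then use Sturmian oscillation to show this condition is satisfied for an increasing infinite sequence of $\lambda$.

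First I would introduce the fundamental matrix $\Phi(t,\lambda)$ whose columns are $(z_1(t,\lambda), z_1'(t,\lambda))^T$ and $(z_2(t,\lambda), z_2'(t,\lambda))^T$, so that $\Phi(0,\lambda)$ is the identity. Because \eqref{theode} has no first-derivative term, Abel's formula forces $\det \Phi(t,\lambda) \equiv 1$; in particular the monodromy matrix $M(\lambda) := \Phi(T,\lambda)$ has determinant $1$ and trace $\delta(\lambda)$. A solution $z = c_1 z_1 + c_2 z_2$ satisfies $(z(T), z'(T))^T = M(\lambda)(c_1, c_2)^T$, and since $Q$ is $T$-periodic such a $z$ is $T$-periodic if and only if $M(\lambda)(c_1,c_2)^T = (c_1,c_2)^T$. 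The characteristic polynomial of $M(\lambda)$ is $\mu^2 - \delta(\lambda)\mu + 1$, so $\mu = 1$ is an eigenvalue of $M(\lambda)$ precisely when $\delta(\lambda) = 2$; this gives the claimed equivalence between existence of a nontrivial $T$-periodic solution and the equation $\delta(\lambda) = 2$.

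Next I would analyze the discriminant function $\delta$. Standard ODE theory shows that $z_i(T,\lambda)$ and $z_i'(T,\lambda)$ depend entire-analytically on $\lambda$, so $\delta:\mathbb{R}\to\mathbb{R}$ is real-analytic and its roots form a discrete set. For $\lambda$ sufficiently negative, $\lambda + Q(t)$ is uniformly negative on $[0,T]$, solutions are of ``hyperbolic'' type (at most one zero, by Sturm comparison with $y'' - c^2 y = 0$), and an explicit estimate produces $\delta(\lambda)\geq 2\cosh(c(\lambda)T)>2$, ruling out periodic solutions. For $\lambda\to+\infty$, Sturm comparison with $y'' + \lambda y = 0$ forces each nontrivial solution of \eqref{theode} to have roughly $T\sqrt{\lambda}/\pi$ zeros on $[0,T]$; translating this via the Pr\"ufer transformation $z = r\sin\theta$, $z' = \sqrt{\lambda}\, r\cos\theta$ shows that $\delta(\lambda)$ oscillates between values $>2$ and $<-2$ infinitely often as $\lambda\to+\infty$.

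Combining these, $\delta(\lambda)-2$ is a real-analytic function that is positive for $\lambda\ll 0$ and changes sign infinitely often as $\lambda\to+\infty$; listing its roots in increasing order produces the desired sequence $\lambda_1<\lambda_2<\cdots$. The main obstacle will be the asymptotic analysis for $\lambda\to+\infty$: Sturm comparison easily gives that solutions oscillate more and more as $\lambda$ grows, but tracking the Pr\"ufer amplitude-phase variables with enough precision to conclude that $\delta(\lambda)$ genuinely crosses both $+2$ and $-2$ infinitely often (not merely that $|\delta|$ is unbounded) is the technical heart of the proof.
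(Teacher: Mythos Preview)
The paper does not prove this theorem at all: Theorem~\ref{ot} is quoted as a classical result from Hill's equation theory, with the proof deferred to the reference \cite{MW} (Magnus--Winkler). So there is no ``paper's own proof'' to compare against; the theorem is used as a black box throughout.

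That said, your sketch is the standard argument and is essentially correct. The reduction to $\delta(\lambda)=2$ via the monodromy matrix (using $\det M(\lambda)=1$ from Abel's identity) is exactly right, and the qualitative analysis of $\delta$ --- analytic in $\lambda$, $\delta(\lambda)\to+\infty$ as $\lambda\to-\infty$, and oscillatory between values above $2$ and below $-2$ as $\lambda\to+\infty$ --- is the classical route. One small caveat: as stated in the theorem the $\lambda_j$ are not required to be strictly increasing (the standard result gives $\lambda_1<\lambda_2\le\lambda_3<\lambda_4\le\lambda_5<\cdots$, since $\delta(\lambda)=2$ can have double roots where $\delta$ is tangent to the line $y=2$), so when you ``list the roots in increasing order'' you should allow repetitions with multiplicity. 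You correctly flag the Pr\"ufer asymptotics as the delicate step; the usual way to handle it is to show $\delta(\lambda)\sim 2\cos(T\sqrt{\lambda})$ as $\lambda\to+\infty$, which forces infinitely many crossings of both $\pm 2$.
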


We will also be using the following theorem proven by Haupt (1914) from the Hill's equation theory. The next presentation of Haupt theorem can be found in \cite{MW}.

\begin{thm} \label{zeroes} Let us denote by $\lambda_1<\lambda_2\le \lambda_3\le \lambda_4\dots$ the sequence of eigenvalues of the Hill's equation presented in Therem \ref{ot}. If $z(t)$ is a nonzero $T$-periodic solution of the equation (\ref{theode}) with $\lambda=\lambda_i$ then, the number of zeros of $z(t)$ in the interval $[0,T) $ is $2 \lfloor \frac{i}{2}\rfloor$.

\end{thm}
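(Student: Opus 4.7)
The plan is to prove Haupt's theorem via the Prüfer transformation. Write any nontrivial solution of (\ref{theode}) as $z(t) = r(t)\sin\theta(t)$, $z'(t) = r(t)\cos\theta(t)$ with $r(t) > 0$. Zeros of $z$ in $[0, T)$ then correspond bijectively to values of $t$ at which $\theta(t) \in \pi\mathbb{Z}$, and a direct computation gives
\[
\theta'(t) = \cos^2\theta(t) + (\lambda + Q(t))\sin^2\theta(t).
\]
At every $t$ with $\theta(t) = n\pi$ the right-hand side equals $1 > 0$, so $\theta$ crosses each multiple of $\pi$ strictly upward; hence the number of zeros of $z$ in $[0, T)$ equals the number of multiples of $\pi$ in the interval $[\theta(0), \theta(T))$.

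Next, for a nontrivial $T$-periodic solution, the condition $(z(T), z'(T)) = (z(0), z'(0))$ together with $r>0$ forces $\theta(T) - \theta(0) = 2k\pi$ for some integer $k \ge 0$; an odd multiple of $\pi$ would reverse the sign of $r$, producing an anti-periodic rather than a periodic solution. The count from the first paragraph then yields exactly $2k$ zeros in $[0, T)$, and the theorem reduces to identifying $k$ with $\lfloor i/2\rfloor$ when $\lambda = \lambda_i$. As preparation, differentiating the Prüfer ODE in $\lambda$ produces a linear first-order equation for $\partial_\lambda \theta$ with nonnegative forcing $\sin^2\theta$ and zero initial value; Duhamel then gives $\partial_\lambda\theta(t, \lambda) > 0$ for $t > 0$, so the \emph{rotation count} $N(\lambda) := (\theta(T, \lambda) - \theta(0, \lambda))/\pi$ is continuous and strictly increasing in $\lambda$.

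The main obstacle is the final bookkeeping that pins down $N(\lambda_i) = 2\lfloor i/2\rfloor$. For this one invokes the classical interleaving of periodic and anti-periodic spectra for Hill's equation, which is itself obtained from analyzing the discriminant $\delta$ of Theorem \ref{ot} (and its anti-periodic analogue): the periodic eigenvalues of Theorem \ref{ot} and the anti-periodic eigenvalues $\mu_j$ interleave as
\[
\lambda_1 < \mu_1 \le \mu_2 < \lambda_2 \le \lambda_3 < \mu_3 \le \mu_4 < \lambda_4 \le \lambda_5 < \cdots .
\]
The ground state $\lambda_1$ admits a positive eigenfunction by a standard variational argument, so $N(\lambda_1) = 0$. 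Strict monotonicity of $N$, combined with the observation that $N$ takes an even integer at each $\lambda_j$ and an odd integer at each $\mu_j$, forces $N(\mu_1) = N(\mu_2) = 1$, $N(\lambda_2) = N(\lambda_3) = 2$, and by induction $N(\lambda_{2k}) = N(\lambda_{2k+1}) = 2k$. Writing $i = 2k$ or $i = 2k+1$ gives $k = \lfloor i/2 \rfloor$, so the corresponding eigenfunction has exactly $2\lfloor i/2\rfloor$ zeros in $[0, T)$.
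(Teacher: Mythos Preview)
The paper does not supply its own proof of this statement: Theorem~\ref{zeroes} is quoted from the classical literature (Haupt 1914, as presented in Magnus--Winkler \cite{MW}) and used as a black box, so there is no in-paper argument to compare your proposal against. Your Pr\"ufer-transformation outline is the standard modern route to Haupt's theorem and is essentially sound.

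One step deserves more care. You want $N(\lambda)=(\theta(T,\lambda)-\theta(0,\lambda))/\pi$ to be defined for \emph{all} $\lambda$ and strictly increasing; for that you must fix $\theta(0)$ independently of $\lambda$. But then at a \emph{simple} periodic eigenvalue $\lambda_i$ the monodromy is a Jordan block, and the solution with your fixed initial angle need not be the periodic eigenfunction, so there is no reason $N(\lambda_i)$ should land exactly on an even integer for the tracked solution. Your sentence ``$N$ takes an even integer at each $\lambda_j$ and an odd integer at each $\mu_j$'' therefore conflates two different objects: the strictly monotone $N$ attached to a fixed initial condition, and the zero count of the actual (anti)periodic eigenfunction. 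The standard repairs are either (i) to pass to the \emph{rotation number} $\rho(\lambda)=\lim_{t\to\infty}\theta(t,\lambda)/t$, which is independent of initial condition, non-decreasing, and equals $k\pi/T$ exactly on the closure of the $k$-th instability interval, or (ii) to invoke Sturm separation to show that the Pr\"ufer angles of any two linearly independent solutions at the same $\lambda$ differ by less than $\pi$ on $[0,T]$, so the integer jumps of $N$ are initial-condition-independent. With either patch the induction in your last paragraph goes through cleanly.
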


We would like to point out that Beeckmann and Lokes have used the Hill equation to find bounds on the eigenvalues of the Laplacian on toroidal surfaces, \cite{BL}

The author would like to thank Andr\'es Rivera, Bruce Solomon and Nelson Casta\~neda for their useful comments and suggestions.

%SECTION 2
%DESCRIBIN ROTATIONAL CMC HYPERSURFACES

\section{Describing rotational cmc hypersufaces of spheres} Any compact cmc rotational hypersurface of $\mathbb{S}^{n+1}$ is given by an immersion $\phi:\mathbb{S}^{n-1}\times \mathbb{R}\to \mathbb{S}^{n+1}$ where,

\begin{eqnarray}\label{theimm}
\phi(y,t)=(r(t)\, y,\sqrt{1-r(t)^2} \cos(\theta(t)),\sqrt{1-r(t)^2} \sin(\theta(t)))
\end{eqnarray}

and $r(t)$ is positive $T$-periodic function that satisfies the following conditions

\begin{eqnarray}\label{sode}
(r^\prime)^2+r^2(1+\lambda^2)=1\, ,
\end{eqnarray}

with

\begin{eqnarray}\label{rel}
\lambda=H+c^{-n/2} r^{-n}, \quad \theta(t)=\int_0^t\frac{r(\tau)\lambda(\tau)}{1-r^2(\tau)}\, d\tau\, ,
\end{eqnarray}

%PERIODICITY CONDITION

and $c$ is a positive real number that satisfies that 
\begin{eqnarray}\label{p}
\theta(T)=2\pi \frac{l}{m} \quad\hbox{where $l$ and $m$ are relative prime integers.}
\end{eqnarray}

The condition on $c$ in equation (\ref{p}) guarantees that the immersion $\phi$ satisfies $\phi(y,t+mT)=\phi(y,t)$ and makes $M$  compact. Recall that the function $r(t)$ depends on $c$ since $\lambda(t)$ depends on $c$. Also, since the function $\theta(t)$ depends on $r(t)$ and $\lambda(t)$, then $\theta(T)$ depends on $c$ as well.

%Otsuki remark
\begin{rem} \label{or} When $M$ is minimal, Otsuki \cite{O1},  \cite{O2}, showed that the expression

$$\theta(T)=K(c)=\int_0^\frac{T}{2}\frac{c^{-1/n}\, r^{1-n}(\tau)}{1-r^2(\tau)}\, d\tau$$

lies between lies $\pi$ and $\sqrt{2} \pi$. As a consequence the equation $\theta(T)=2\pi \frac{l}{m}$  cannot be solved with $l = 1$ and therefore no minimal rotational hypersurface is embedded.
\end{rem}

The principal curvatures of $M$ are $\lambda$ with multiplicty $(n-1)$ and $\mu=H-(n-1)c^{-n/2} r^{-n}$ with multiplicity one. Differentiating equation (\ref{sode}) we obtain

\begin{eqnarray}\label{soder}
 r^{\prime\prime}+r+r\lambda\mu=0 \, .
\end{eqnarray}

The next expression explicitly provides the Gauss map $\nu$ of the immersion (\ref{theimm}),

\begin{eqnarray}\label{thegauss}
\nu(y,t)=(-r\lambda\, y,\frac{r^2\lambda  \cos\theta-r^\prime  \sin\theta}{\sqrt{1-r(t)^2}},\frac{r^2\lambda  \sin\theta+r^\prime  \cos\theta}{\sqrt{1-r(t)^2}})
\end{eqnarray}

All the details of the construction of these hypersurfaces can be found in \cite{P}.

%SECTION 3

\section{Main theorems}

Before stating the following theorem, we recall that $m$ is the integer given in equation (\ref{p}),  $T$ is the period of the function $r(t)$, and $mT$ is the period of the immersion $\phi$.

%EXPRESSION FOR THE LAPLACIAN

\begin{thm} \label{lap} Let $M$ be a rotation hypersurface defined by equation (\ref{theimm}). For any function $\bar{f}:\mathbb{S}^{n-1}\longrightarrow \mathbb{R}$ we define $f:M\longrightarrow \mathbb{R}$ as $f(\phi(t,y))=\bar{f}(y)$. Likewise, for any $mT$-periodic function $\bar{g}:\mathbb{R}\longrightarrow \mathbb{R}$ we define $g:M\longrightarrow \mathbb{R}$ as $g(\phi(t,y))=\bar{g}(t)$. We will denote by $\bar{\Delta}$ the Laplacian operator on $\mathbb{S}^{n-1}$. With this notation we have  
\begin{eqnarray}\label{lap}
\Delta (fg)=f\left(\bar{g}^{\prime\prime}+(n-1)\frac{r^\prime}{r} \bar{g}^\prime\right)+\frac{\bar{\Delta}(\bar{f})}{r^2}\,  g\, .
\end{eqnarray}
\end{thm}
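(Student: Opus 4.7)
The strategy is to reduce the identity to a standard warped-product Laplacian computation, after first checking that the parametrization in equation (\ref{theimm}) presents $M$ as the warped product $\mathbb{S}^{n-1}\times_{r} I$ with the $t$-parameter being arclength along the profile curve.

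First I would compute the induced metric from $\phi(y,t)$. For tangent vectors $\partial_{y_i}\phi=(r(t)\,\partial_{y_i}y,\,0,\,0)$, one gets $\langle\partial_{y_i}\phi,\partial_{y_j}\phi\rangle=r(t)^{2}\,\bar g_{ij}(y)$, where $\bar g_{ij}$ is the round metric on $\mathbb{S}^{n-1}$. For the $t$-direction, setting $u(t)=\sqrt{1-r(t)^{2}}$ so that $u'=-rr'/u$, a direct differentiation gives
\begin{equation*}
|\partial_t\phi|^{2}=(r')^{2}+(u')^{2}+u^{2}(\theta')^{2}=(r')^{2}+\frac{r^{2}(r')^{2}}{1-r^{2}}+(1-r^{2})(\theta')^{2}.
\end{equation*}
Substituting $\theta'=r\lambda/(1-r^{2})$ from (\ref{rel}) and simplifying yields $|\partial_t\phi|^{2}=\bigl((r')^{2}+r^{2}\lambda^{2}\bigr)/(1-r^{2})$, which equals $1$ by (\ref{sode}). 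The cross terms $\langle\partial_{y_i}\phi,\partial_t\phi\rangle$ vanish because they reduce to $rr'\,\langle\partial_{y_i}y,y\rangle=0$. Hence the induced metric on $M$ is exactly
\begin{equation*}
ds^{2}=dt^{2}+r(t)^{2}\,d\sigma^{2},
\end{equation*}
the warped-product metric with warping function $r$.

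Once this is established, the formula follows from the standard warped-product Laplacian. Using $|g|=r^{2(n-1)}\det(\bar g)$ and the decomposition $g^{tt}=1$, $g^{ij}=r^{-2}\bar g^{ij}$, one writes
\begin{equation*}
\Delta(fg)=\frac{1}{r^{n-1}}\partial_t\!\bigl(r^{n-1}\partial_t(fg)\bigr)+\frac{1}{r^{2}\sqrt{\det\bar g}}\,\partial_{y_i}\!\bigl(\sqrt{\det\bar g}\,\bar g^{ij}\partial_{y_j}(fg)\bigr).
\end{equation*}
Because $f$ depends only on $y$ and $g$ only on $t$, each term splits cleanly, producing
\begin{equation*}
\Delta(fg)=f\!\left(\bar g''+(n-1)\tfrac{r'}{r}\bar g'\right)+\tfrac{1}{r^{2}}\bar\Delta(\bar f)\,g,
\end{equation*}
which is precisely (\ref{lap}).

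The only real obstacle is the metric calculation: verifying $|\partial_t\phi|^{2}=1$ requires combining (\ref{sode}) and the formula for $\theta'$ in (\ref{rel}) to kill the apparent $\lambda$-dependence. Everything afterward is a mechanical use of the warped-product Laplacian formula, which is why the statement of the theorem only invokes $r(t)$ and the sphere Laplacian $\bar\Delta$ and makes no reference to $\lambda$, $\theta$ or $H$.
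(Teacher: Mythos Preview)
Your proof is correct. The key content --- that the induced metric is the warped product $dt^{2}+r(t)^{2}\,d\sigma^{2}$ --- is exactly what underlies the paper's argument as well, and your verification of $|\partial_t\phi|^{2}=1$ via (\ref{sode}) and (\ref{rel}) is the only substantive step. The paper phrases things extrinsically: it records the gradient as $\nabla(fg)=\tfrac{g}{r}\bar{\nabla}\bar f + f\bar g'\,\partial_t$ (using the ambient identification of tangent spaces in $\mathbb{R}^{n+2}$) and then takes the divergence, using $\mathrm{div}(\partial_t)=(n-1)r'/r$. Your route is the intrinsic one --- compute the induced metric in coordinates and apply the warped-product Laplacian formula --- and in fact the Remark immediately following the paper's proof points to precisely this alternative. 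The two arguments amount to the same computation; yours is more self-contained and makes the warped-product structure explicit, while the paper's is terser but relies on the ambient $\mathbb{R}^{n+2}$ framing.
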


\begin{proof}
The proof is a direct computation using the fact that $\nabla (fg)=\frac{g}{r}\bar{\nabla} \bar{f}+f\bar{g}^\prime \frac{\partial}{\partial t}$ and that $\mathrm{div}(\frac{\partial}{\partial t})=(n-1)\frac{r^\prime}{r}$.

\end{proof} 

\begin{rem} 

We would like to point out that we are using the fact that the ambient space is $\mathbb{R}^{n+2}$ in the argument above. In particular we are using the natural identification of all tangent spaces $T_x \mathbb{R}^{n+2}$ with $\mathbb{R}^{n+2}$. If we decide to work intrinsically we will notice that the formula that compares the gradients will have an additional factor of $r$.  From the point of view of intrinsic differential geometry, the formula in Theorem \ref{lap} can be generalized to wraped products. See \cite{MG}.

\end{rem}

%COMPARING EIGENVALUES OF M WITH EIGENVALUES OF HILL'S EQUATIONS

\begin{thm}\label{mt}
Let $\alpha_1=0,\alpha_2=(n-1),\dots, \alpha_{k}=(k-1)(n+k-3)\dots$  denote the spectrum of $\mathbb{S}^{n-1}$. The spectrum of the Laplace operator $\Delta$ on $M$ is  given by $\cup_{k=1}^\infty \Gamma_k$, where,

$$\Gamma_k=\{ \lambda(k,1),\lambda(k,2),\dots \}$$

is the  ordered spectrum of the operator 

$$K_{\Delta,k}[z]= z^{\prime\prime}+(n-1)\frac{r^\prime}{r} z^\prime -\frac{\alpha_k}{r^2}\,  z $$

The spectrum of the Jacobi operator $J$ on $M$ is given by $\cup_{k=1}^\infty \mathbb{F}_k$, where

$$\mathbb{F}_k=\{ \tilde{\lambda}(k,1),\tilde{\lambda}(k,2),\dots \}$$

is the ordered spectrum of the operator 

$$K_{J,k}[z]= z^{\prime\prime}+(n-1)\frac{r^\prime}{r} z^\prime 
+\left(n+ nH^2+n(n-1) c^{-n} r^{-2 n}-\frac{\alpha_k}{r^2} \right)\,  z $$

\end{thm}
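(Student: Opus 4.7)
The plan is to separate variables. Since $M$, as an abstract Riemannian manifold, is a warped product of $\mathbb{S}^{n-1}$ with the circle $\mathbb{R}/mT\mathbb{Z}$ (warp factor $r(t)$), I expect every eigenfunction to arise from products $f(y)g(t)$ where $\bar{f}$ is a spherical harmonic of degree $k-1$ and $\bar{g}$ is smooth and $mT$-periodic. First I would fix $k$, take $\bar{f}$ with $\bar{\Delta}\bar{f}=-\alpha_k\bar{f}$ (so $-\bar{\Delta}$ has eigenvalue $\alpha_k$ on degree-$(k-1)$ spherical harmonics), and substitute directly into Theorem \ref{lap} to obtain
\begin{equation*}
\Delta(fg) = f\cdot K_{\Delta,k}[\bar{g}].
\end{equation*}
Thus $fg$ is an eigenfunction of $\Delta$ exactly when $\bar{g}$ is an $mT$-periodic eigenfunction of $K_{\Delta,k}$; this gives $\Gamma_k$.

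For the Jacobi operator the key algebraic step is to write $|A|^2$ in terms of $r$ alone. Using the principal curvatures listed just after equation (\ref{soder}) together with $\lambda=H+c^{-n/2}r^{-n}$ and $\mu=H-(n-1)c^{-n/2}r^{-n}$, the cross terms cancel in $(n-1)\lambda^2+\mu^2$ and I get
\begin{equation*}
|A|^2 = nH^2 + n(n-1)\, c^{-n}\, r^{-2n}.
\end{equation*}
Combining with the Laplacian reduction gives
\begin{equation*}
J(fg) = -\Delta(fg)-(n+|A|^2)fg = -f\cdot K_{J,k}[\bar{g}],
\end{equation*}
so the $J$-spectrum on such products is identified with the spectrum of $K_{J,k}$ on $mT$-periodic functions (up to the overall conventional sign).

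Next I would remove the first-order term from each $K_{\bullet,k}$ via the substitution $z=r^{-(n-1)/2}h$, conjugating the operator into standard Schr\"odinger form $h''+Q(t)h$ with $Q$ smooth and $mT$-periodic; since $r$ is smooth, $T$-periodic, and nowhere vanishing, $mT$-periodicity is preserved and $Q$ is smooth. Theorem \ref{ot}, applied with period $mT$, then delivers the required infinite increasing sequences $\{\lambda(k,j)\}_{j}$ and $\{\tilde{\lambda}(k,j)\}_{j}$ as the $mT$-periodic spectra.

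The hard part will be completeness: showing that these eigenfunctions exhaust $L^2(M)$. The induced metric on $M$ has volume form $r^{n-1}\,dt\wedge dV_{\mathbb{S}^{n-1}}$, so each $K_{\bullet,k}$ is formally self-adjoint on the weighted space $L^2(\mathbb{R}/mT\mathbb{Z},\,r^{n-1}dt)$, which is precisely the weight compatible with the ambient $L^2$-structure on $M$. A Peter--Weyl / Fourier decomposition of $L^2(M)$ along the $\mathbb{S}^{n-1}$-factor, assembling the Hill eigenbases fiber by fiber, then produces a complete orthogonal eigenbasis of $\Delta$ and $J$, showing that $\bigcup_{k\ge1}\Gamma_k$ and $\bigcup_{k\ge1}\mathbb{F}_k$ are the full spectra (with each eigenvalue $\lambda(k,j)$ and $\tilde{\lambda}(k,j)$ carrying multiplicity at least $\dim\mathcal{H}_{k-1}$, the dimension of the space of spherical harmonics of degree $k-1$, coincidences between different $k$ accounting for the remaining multiplicity).
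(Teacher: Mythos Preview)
Your proposal is correct and follows essentially the same route as the paper: separation of variables via Theorem \ref{lap}, the identity $|A|^2=nH^2+n(n-1)c^{-n}r^{-2n}$, and completeness by first expanding along a spherical-harmonic basis of $L^2(\mathbb{S}^{n-1})$ and then expanding each $t$-coefficient in eigenfunctions of $K_{\Delta,k}$ (resp.\ $K_{J,k}$). The only difference is organizational: you fold the conjugation $z=r^{-(n-1)/2}h$ to Hill form and the appeal to Theorem \ref{ot} into the proof, whereas the paper states those as a separate lemma immediately afterward; your explicit mention of the weight $r^{n-1}\,dt$ and self-adjointness is a detail the paper leaves implicit.
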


\begin{proof}
Let us prove the case of the Laplacian. By Theorem \ref{lap} we have that if  $\bar{f_k}$ is an eigenfunction of the Laplacian on $\mathbb{S}^{n-1}$ with eigenvalue $\alpha_k$ and $\bar{g}_l(t)$ is an eigenfunction of the operator $K_{\Delta,k}$ with eigenvalue $\lambda(k,l)$, then $h_{k,l}:M\longrightarrow \mathbb{R}$ given by $h_{kl}=f_kg_l$ is an eigenfunction of the Laplacian with  eigenvalue $\lambda(k,l)$. Therefore $\cup_{k=1}^\infty \Gamma_k$ is contained in the spectrum of $\Delta$.  To prove  the reverse inclusion, we only need to point out that every smooth function $h:M\longrightarrow\mathbb{R}$ can be written as a series of eigenfunctions of the form $h_{kl}$.  There exists a basis

$$\bar{f}_{1,1}, \bar{f}_{2,1}, \bar{f}_{2,2},\dots, \bar{f}_{2,n}, \bar{f}_{3,1}, \bar{f}_{3,2},\dots \bar{f}_{3,2n}, \bar{f}_{4,1},\dots $$

for $L^2(\mathbb{S}^{n-1})$ with $\bar{\Delta} \bar{f}_{k,j}+\alpha_k\bar{f}_{k,j}=0$. So any $h:M\longrightarrow\mathbb{R}$ can be written as a sum of the form

$$a_{1,1}(t) \bar{f}_{1,1}+ a_{2,1}(t)  \bar{f}_{2,1}+\dots + a_{2,n}(t)  \bar{f}_{2,n}+a_{3,1}(t)  \bar{f}_{3,1}+\dots$$

We obtain the desired expression for the function $h:M\longrightarrow\mathbb{R}$, by noticing that each $a_{k,l}(t)$ can now be expanded in eigenfunctions of the operator $K_{\Delta,k}$. The proof for the Jacobi operator is similar and uses the following expression for $|A|^2$,

$$|A|^2=n(H^2+(n-1)c^{-n}r^{-2n})$$

\end{proof}

%LEMMA THAT ALLOWS US WE CAN ADDAPT THE OSCILLATION THEOREM 

The following lemma allows us to use the Oscillation Theorem to compute the eigenvalues  for the second order differential equations on Theorem \ref{mt}.

\begin{lem} Let us denote by $\lambda=H+c^{-n/2} r^{-n}$ and $\mu=H-(n-1) c^{-n/2} r^{-n}$. The change of variables $u=r^{\frac{n-1}{2}}z$ gives us, 

\begin{eqnarray*}
K_{\Delta,k}&=& z^{\prime\prime}+(n-1)\frac{r^\prime}{r} z^\prime -\frac{\alpha_k}{r^2}\,  z \\
&=&\frac{1}{r^{\frac{n-1}{2}}}\left(  u^{\prime\prime}+ \left(\frac{1}{4} \lambda ^2 ((n-1) (n-3))+\frac{1}{2} \lambda  \mu  (n-1)-\frac{4 \alpha_k +(n-3) (n-1)}{4 r^2}+\frac{1}{4} (n-1)^2\right) u\right)
\end{eqnarray*}

and

\begin{eqnarray*}
K_{J,k}&=& z^{\prime\prime}+(n-1)\frac{r^\prime}{r} z^\prime 
+\left(n+ nH^2+n(n-1) c^{-n} r^{-2 n}-\frac{\alpha_k}{r^2} \right)\,  z \\
&=&\frac{1}{r^{\frac{n-1}{2}}}\left(  u^{\prime\prime}+ \frac{1}{4} \left(\lambda^2 \left(n^2-1\right)+2 \lambda \mu  (n-1)+4 \mu^2+(n+1)^2-\frac{4 \alpha_k +(n-3) (n-1)}{r^2}\right) u\right)
\end{eqnarray*}

\end{lem}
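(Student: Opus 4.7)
The proof is a direct computation. The substitution $u=r^{(n-1)/2}z$ is the standard device for converting a Sturm--Liouville operator $z''+(n-1)\frac{r'}{r}z'+\cdots$ into a Schr\"odinger-type operator $u''+Q(t)u$ with no first-order term. The plan is to carry out this substitution, and then use the two ODEs (\ref{sode}) and (\ref{soder}) to eliminate $(r')^2$ and $r''$ in favor of expressions involving $\lambda$, $\mu$ and $1/r^2$, which is exactly the shape of the claimed right-hand sides.

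First I set $\beta=(n-1)/2$ and write $z=r^{-\beta}u$. A direct differentiation yields
\[
z''+(n-1)\frac{r'}{r}z'=r^{-\beta}\!\left(u''+\beta(1-\beta)\frac{(r')^2}{r^2}u-\beta\frac{r''}{r}u\right),
\]
the $u'$ coefficient cancelling exactly because of the choice $\beta=(n-1)/2$ (this is the reason for that exponent). From (\ref{sode}) one has $(r')^2/r^2=1/r^2-1-\lambda^2$, and from (\ref{soder}) one has $r''/r=-1-\lambda\mu$. Substituting these two identities collapses all derivatives of $r$ into a potential of the form $A\lambda^2+B\lambda\mu+C+D/r^2$, which is already the template of the stated formulas.

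For $K_{\Delta,k}$ one then simply adds $-\alpha_k r^{-2}z$ and collects the $1/r^2$ coefficients; combining $\beta(1-\beta)(1/r^2-1-\lambda^2)+\beta(1+\lambda\mu)-\alpha_k/r^2$ and simplifying with $\beta(1-\beta)=-(n-1)(n-3)/4$ reproduces the claimed expression, using the arithmetic identity $\tfrac{(n-1)(n-3)}{4}+\tfrac{n-1}{2}=\tfrac{(n-1)^2}{4}$ for the constant term. For $K_{J,k}$ one must first rewrite the extra potential $n+nH^2+n(n-1)c^{-n}r^{-2n}-\alpha_k/r^2$ in terms of $\lambda,\mu$. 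Using the defining relations $\lambda-\mu=n\,c^{-n/2}r^{-n}$ and $(n-1)\lambda+\mu=nH$, a short expansion gives
\[
nH^2+n(n-1)c^{-n}r^{-2n}=(n-1)\lambda^2+\mu^2,
\]
after which the $\lambda^2$, $\lambda\mu$, $\mu^2$ and constant coefficients line up with the stated formula via the two checks $\tfrac{(n-1)(n-3)}{4}+(n-1)=\tfrac{n^2-1}{4}$ and $\tfrac{(n-1)^2}{4}+n=\tfrac{(n+1)^2}{4}$.

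There is no real obstacle: the computation is purely algebraic, and the only places one might slip are the two small arithmetic identities above and the need to remember both ODEs for $r$ (forgetting to differentiate (\ref{sode}) leaves a stray $r''$ that refuses to simplify). The conceptual content is entirely concentrated in the opening substitution, whose purpose is to put the operators into Hill form so that Theorems \ref{ot} and \ref{zeroes} can be applied.
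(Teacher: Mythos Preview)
Your proposal is correct and is exactly the direct computation the paper has in mind (the paper states the lemma without proof, as it is a routine Liouville substitution). The cancellation of the $u'$ term, the use of (\ref{sode}) and (\ref{soder}) to eliminate $(r')^2/r^2$ and $r''/r$, and the identity $nH^2+n(n-1)c^{-n}r^{-2n}=(n-1)\lambda^2+\mu^2$ are precisely the ingredients needed, and your arithmetic checks on the constant and $\lambda^2$ coefficients are all valid.
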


%ESTIMATES ON THE INDEX AND THE POSITION OF N AS AN EIGENVALUE OF THE LAPLACIAN.

\begin{thm}\label{index}
If $\mathcal{M}\subset \mathbb{S}^{n+1}$ is a rotational minimal compact hypersurface, then $\mathrm{ind}(\mathcal{M}^n)\ge 3n+5$ and there are at least two  positive eigenvalues of the Laplace operator smaller than $n$. More precisely, if $m$ and $l$ are the relative prime integers in equation (\ref{p}), then 
\begin{itemize}
\item
$l\ge2$, $m\ge 3$,
\item
 $\mathrm{ind}(\mathcal{M}^n)\ge (2l-1)n+(2m-1)$ and,
 \item
 there are exactly $2l-2$ positive eigenvalues of the Laplacian  smaller than $n$.
\end{itemize}
\end{thm}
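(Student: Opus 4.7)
The plan is to reduce both eigenvalue problems to the Hill normal form supplied by the preceding lemma and then to apply Theorems \ref{ot} and \ref{zeroes} by locating several geometrically natural eigenfunctions in the Haupt ordering through a zero count. As a preliminary, Otsuki's estimate in Remark \ref{or} gives $2\pi l/m = \theta(T) \in (\pi, \sqrt{2}\,\pi)$, so that $l/m \in (1/2, \sqrt{2}/2)$ with $\gcd(l,m) = 1$; the case $l = 1$ gives no admissible $m$, and the smallest coprime pair in this window is $(l,m) = (2,3)$, hence $l \geq 2$ and $m \geq 3$.

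For the eigenvalues of the Laplacian below $n$, I would work inside the stratum $k = 1$ of Theorem \ref{mt}. The constant function is an eigenfunction of $K_{\Delta,1}$ with eigenvalue $0$ and no zeros, so it sits in Haupt position $1$. The ambient coordinate functions $x_{n+1} = \sqrt{1-r^2}\cos\theta$ and $x_{n+2} = \sqrt{1-r^2}\sin\theta$ are functions of $t$ alone and, being restrictions to a minimal hypersurface of $\mathbb{S}^{n+1}$ of linear functions on $\mathbb{R}^{n+2}$, are eigenfunctions of $\Delta$ with eigenvalue $n$; each has exactly $2l$ zeros in $[0,mT)$ because $\theta$ sweeps from $0$ to $2\pi l$ on this interval. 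Their linear independence forces $n$ to be a double Hill eigenvalue of $K_{\Delta,1}$, placed by Haupt at positions $2l$ and $2l+1$, so positions $2, 3, \dots, 2l-1$ of $\Gamma_1$ yield exactly $2l - 2 \geq 2$ eigenvalues in $(0,n)$. For $k \geq 2$, a direct substitution using (\ref{sode}) and (\ref{soder}) gives $K_{\Delta,2}[r] = -nr$; since $r > 0$ the function $r$ sits in Haupt position $1$ of $\Gamma_2$, so $\lambda(2,1) = n$, and monotonicity of the Hill spectrum in the potential (which drops as $\alpha_k$ grows) forces $\lambda(k,j) \geq n$ for every $k \geq 2$ and every $j$.

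For the Jacobi index I would extract kernel elements of $J$ from ambient Killing fields via the standard identity $\langle Y, \nu\rangle \in \ker J$ for every Killing field $Y$ on $\mathbb{R}^{n+2}$. For $k = 1$, the axial rotation $X_{\mathrm{ax}} = -x_{n+2}\partial_{n+1} + x_{n+1}\partial_{n+2}$ gives $\langle X_{\mathrm{ax}}, \nu\rangle = r'(t)$ by a short computation from (\ref{thegauss}); the phase portrait of the first-order conservation law (\ref{sode}) is a simple closed orbit in the $(r,r')$-plane, so $r$ has exactly one maximum and one minimum per minimal period $T$, and $r'$ has exactly $2m$ zeros in $[0,mT)$. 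Haupt then places $0$ at position $2m$ or $2m+1$ of $\mathbb{F}_1$, producing at least $2m-1$ strictly negative Hill eigenvalues. For $k = 2$, the rotations $-x_{n+1}\partial_i + x_i\partial_{n+1}$ and $-x_{n+2}\partial_i + x_i\partial_{n+2}$ give kernel elements of $J$ of the form $y_i\, a(t)$ and $y_i\, b(t)$ respectively, where a direct calculation yields
$$a(t) + i b(t) = \frac{r(t)\bigl(\lambda(t) + i r'(t)\bigr)e^{i\theta(t)}}{\sqrt{1 - r(t)^2}}.$$
Equation (\ref{sode}) reduces the modulus to $|a + ib| = r\sqrt{1 + \lambda^2} > 0$, so this is a smooth nowhere-vanishing closed curve in $\mathbb{C}^{*}$.

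The main technical step is the winding and zero count for this curve. Writing $\arg(a + ib) = \theta + \arg(\lambda + i r')$, the first summand is strictly monotone with total change $2\pi l$ over $[0,mT]$; the second is $T$-periodic and takes values in $(-\pi/2, \pi/2)$ because $\lambda > 0$, so its continuous lift returns to its initial value over each period of $r$ and contributes $0$. Thus the winding number of $a + ib$ around the origin is exactly $l$, which forces any nonzero real linear combination of $a$ and $b$ to cross the imaginary axis at least $2l$ times and hence to have at least $2l$ zeros in $[0, mT)$. Since $a$ and $b$ are linearly independent $mT$-periodic solutions of $K_{J,2}[z] = 0$, the value $0$ is a double Hill eigenvalue of $K_{J,2}$, and Haupt places it at positions at least $2l$ and $2l + 1$ of $\mathbb{F}_2$, giving at least $2l - 1$ strictly negative Hill eigenvalues, each counted with multiplicity $d_2 = n$ in $\ind(J)$. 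Summing the two strata yields $\ind(\mathcal{M}^n) \geq (2m - 1) + (2l - 1) n$, and the bounds $l \geq 2$, $m \geq 3$ then give $\ind(\mathcal{M}^n) \geq 3n + 5$.
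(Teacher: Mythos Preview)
Your proof is correct and follows essentially the same strategy as the paper: locate the natural eigenfunctions $\sqrt{1-r^2}\cos\theta$, $\sqrt{1-r^2}\sin\theta$, $r$, $r'$, and the two $K_{J,2}$-kernel elements coming from rotations mixing the axis with the $\mathbb{S}^{n-1}$ directions, count their zeros, and invoke Haupt's theorem. Your winding-number computation for the zero count of $a$ and $b$ (via $|a+ib|=r\sqrt{1+\lambda^2}>0$ and $\arg(\lambda+ir')\in(-\pi/2,\pi/2)$) is a clean repackaging of the paper's argument, which instead solves $\cot\theta(t)=r'/(c^{-n/2}r^{-n})$ once per half-turn of $\theta$; the Killing-field interpretation you give appears in the paper only as a remark after the proof.
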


\begin{proof}
As pointed out in Remark \ref{or}, the positive integer $l$ that satisfies  the equation $\theta(T)=2\pi \frac{l}{m}$ must be greater than $1$. We also have that $m\ge 3$. Recall that Otsuki have shown that, for any $c$, the integral that defines $\theta(T)$ is a number between $\pi$ and $\sqrt{2}\pi$. In this proof we will be using the notation for $\lambda(k,j)$ and $\tilde{\lambda}(k,j)$ introduced in Theorem \ref{mt}. More precisely, the eigenvalues of the operator $K_{\Delta,k}$ are 

$$\lambda(k,1)<\lambda(k,2)\le \lambda(k,3)\dots$$

and the eigenvalues of the operator $K_{J,k}$ are 

$$\tilde{\lambda}(k,1)<\tilde{\lambda}(k,2)\le \tilde{\lambda}(k,3)\dots$$

A direct verification shows that the functions $f_1(t)= \sqrt{1-r^2(t)}\cos(\theta)$ and $f_2(t)= \sqrt{1-r^2(t)}\sin(\theta)$ satisfy the equation $K_{\Delta,1}(f_i)+nf_i=0$, for $i=1,2$.  The functions $f_1$ and $f_2$ have  $2l$ zeroes in the interval $[0,mT)$  By Theorem \ref{zeroes} we conclude that $n=\lambda(1,2l)=\lambda(1,2l+1)$. Therefore, the eigenvalues of  $K_{\Delta,1}$ are

$$\lambda(1,1)=0<\lambda(1,2)\le\lambda(1,3) <\dots<\lambda(1,2l)=\lambda(1,2l+1)=n<\dots$$

This shows the existence of $2l-2$ eigenvalues (counted with multiplicity) of the Laplace operator smaller than $n$. 
A direct verification shows that $r(t)$ satisfies the equation $K_{\Delta,2}(r)+nr=0$. Since $r(t)$ is positive, then $\lambda(2,1)=n$. Therefore all the eigenvalues of the Laplace operator smaller than $n$ comes from the operators $K_{\Delta,1}$. We conclude that there are exactly $2l-2$ positive eigenvalues of the Laplacian smaller than $n$. Let us show the inequality for the stability index. A direct verification shows that the function $r^\prime$ satisfy the equation $K_{J,1}(r^\prime)=0$. The numbers of zeroes of $r^\prime$ in the interval $[0,mT)$ is  $2m$. By Theorem \ref{zeroes} we conclude that either $0=\tilde{\lambda}(1,2m)$ or $0=\tilde{\lambda}(1,2m+1)$ Therefore the operator $K_{J,1}$ has at least $2m-1$ negative eigenvalues.  Let us analyze the spectrum of the operator $K_{J,2}$.   A direct verification shows that the functions $f_3(t)= \frac{-c^{-n/2} r^{1-n}\cos(\theta)+r r^\prime \sin(\theta)}{\sqrt{1-r^2(t)}}$ and $f_4(t)=\frac{c^{-n/2} r^{1-n}\sin(\theta)+rr^\prime \cos(\theta)}{\sqrt{1-r^2(t)}}$ satisfy the equation $K_{J,2}(f_i)=0$ for  $i=3,4$. The numbers of zeroes $f_3$ and $f_4$ in the interval $[0,mT)$ is at least $2l$ since every time $\theta(t)$ changes from $s\pi$ to $(s+1) \pi$ with $s$ an integer, there is at least a value of $t$ that solves the equation $\cot(\theta(t))=\frac{r^\prime}{c^{n/2}r^n}$. By Theorem \ref{zeroes} we conclude that $0=\tilde{\lambda}(2,2k)=\tilde{\lambda}(2,2k+1)$ where $2k$ is the number of zeroes of $f_4$ and $f_5$ on the interval $[0,mT)$. Recall that $k\ge l$. Therefore the operator $K_{J,2}$ has at least $2l-1$ negative eigenvalues. Since the multiplicity of the eigenvalue $\alpha_2=(n-1)$ of the Laplace operator on $S^{n-1}$ is $n$, we conclude that $\mathrm{ind}(\mathcal{M})\ge (2l-1) n+(2m-1)$.
\end{proof}

\begin{rem}
There is a geometrical reason for the functions $r^\prime$ and $f_i$, $i=1,2,3,4$ to satisfy the respective Hill's equation mentioned in the proof of previous theorem. If we write the immersion $\phi$ in Equation (\ref{theimm}) as $(\phi_1,\dots,\phi_{n+2})$ and the Gauss map $\nu$ in Equation (\ref{thegauss}) as $(\nu_1,\dots,\nu_{n+2})$. The functions $f_1$ and $f_2$ are the last two coordinates of the immersion $\phi$ and, in general, all the coordinates of an immersion of a minimal hypersurface of $\mathbb{S}^{n+1}$ are eigenfunctions of the Laplacian associated with the eigenvalue $n$. The function $r^\prime$ is the simplification of the expression $\phi_{n+2}\nu_{n+1}-\phi_{n+1}\nu_{n+2}$ and in general all the functions of the form $\nu_i\phi_j-\nu_j\phi_i$ are either the zero function or they are eigenfunctions of the stability operator associated with the eigenvalue $0$. Finally, we have that $f_4$ and $f_5$ are factors of the expression $\phi_1\nu_{n+1}-\phi_{n+1}\nu_1$ and $\phi_1\nu_{n+2}-\phi_{n+2}\nu_1$. 
\end{rem}

\begin{thm}\label{indexcmc}
Let $M\subset \mathbb{S}^{n+1}$ be a rotational CMC compact hypersurface. If $m$ and $l$ are the relative prime integers in equation (\ref{p}) then, the number of negative eigenvalues of the Jacobi operator counted with multiplicity is at least $ (2l-1)n+(2m-1)$

\end{thm}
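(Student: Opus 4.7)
The plan is to adapt the index portion of the proof of Theorem \ref{index} by using ambient Killing vector fields of $\mathbb{S}^{n+1}\subset\mathbb{R}^{n+2}$ to produce kernel elements of $J$. The key general fact is that for any compact CMC hypersurface $M$ and any ambient Killing field $X$ of $\mathbb{S}^{n+1}$, the function $\langle X,\nu\rangle$ on $M$ lies in $\ker J$. The statements about the Laplacian from Theorem \ref{index} play no role here; only the lower bounds coming from $K_{J,1}$ and $K_{J,2}$ need to be transferred.

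First I would use the rotation $K_0 = x_{n+1}\partial_{n+2}-x_{n+2}\partial_{n+1}$ that fixes the axis of symmetry. Using (\ref{theimm}) and (\ref{thegauss}) one checks that $\langle K_0,\nu\rangle = r'(t)$. Since this depends only on $t$, Theorem \ref{lap} together with Theorem \ref{mt} yields $K_{J,1}(r')=0$. Equation (\ref{sode}) forces $r$ to attain its maximum and minimum exactly once per period $T$, so $r'$ has exactly $2m$ zeros in $[0,mT)$, and Theorem \ref{zeroes} gives $0=\tilde\lambda(1,2m)$ or $0=\tilde\lambda(1,2m+1)$. Hence $K_{J,1}$ contributes at least $2m-1$ negative eigenvalues.

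Next I would apply the same recipe to the $2n$ Killing fields $K_{i,n+1}=x_i\partial_{n+1}-x_{n+1}\partial_i$ and $K_{i,n+2}=x_i\partial_{n+2}-x_{n+2}\partial_i$ for $i=1,\dots,n$. A direct computation gives
$$\langle K_{i,n+1},\nu\rangle = y_i\,\tilde f_3(t),\qquad \langle K_{i,n+2},\nu\rangle = y_i\,\tilde f_4(t),$$
with
$$\tilde f_3(t)=\frac{r\lambda\cos\theta - rr'\sin\theta}{\sqrt{1-r^2}},\qquad \tilde f_4(t)=\frac{r\lambda\sin\theta + rr'\cos\theta}{\sqrt{1-r^2}}.$$
Because each $y_i$ is a first-degree spherical harmonic on $\mathbb{S}^{n-1}$ (eigenvalue $\alpha_2=n-1$ of multiplicity $n$), Theorems \ref{lap} and \ref{mt} translate $J(y_i\tilde f_j)=0$ into $K_{J,2}(\tilde f_3)=K_{J,2}(\tilde f_4)=0$. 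Evaluating $\tilde f_3$ at the $2l+1$ points $t_s\in[0,mT]$ where $\theta(t_s)=s\pi$, $s=0,\dots,2l$, gives $\tilde f_3(t_s)=(-1)^s r\lambda/\sqrt{1-r^2}$; these values are nonzero with alternating signs, so the intermediate value theorem supplies at least $2l$ zeros of $\tilde f_3$ in $[0,mT)$, and Theorem \ref{zeroes} then yields at least $2l-1$ negative eigenvalues of $K_{J,2}$.

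Combining these contributions and using that $\alpha_2=n-1$ has multiplicity $n$ on $\mathbb{S}^{n-1}$, Theorem \ref{mt} produces at least $(2l-1)n+(2m-1)$ negative eigenvalues of $J$. The step that requires the most care is the justification that the $t_s$ really exist and that $\tilde f_3(t_s)$ alternates in sign: both reduce to $\theta$ being monotone and $\lambda$ having constant sign on $[0,mT]$. In the minimal case $\lambda=c^{-n/2}r^{-n}$ is automatically positive, while in the CMC case $\lambda=H+c^{-n/2}r^{-n}$ could in principle vanish. The expected remedy is to choose the orientation of $\nu$ so that $\lambda>0$ (consistent with $\theta(T)=2\pi l/m>0$) and to exploit the first integral (\ref{sode}) to rule out $\lambda=0$; alternatively the IVT zero count may be replaced by a winding number computation for the planar curve $t\mapsto(\lambda(t),r'(t))$ to recover the bound $2l$.
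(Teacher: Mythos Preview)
Your approach coincides with the paper's: the functions $\tilde f_3,\tilde f_4$ are exactly the paper's $f_1,f_2$ once one expands $\lambda=H+c^{-n/2}r^{-n}$, and the contributions from $K_{J,1}$ (via $r'$, with $2m$ zeros) and $K_{J,2}$ (via these two kernel elements, with at least $2l$ zeros) are combined with the multiplicity $n$ of $\alpha_2$ in the same way. The paper simply asserts the $2l$-zero lower bound for $f_1,f_2$ without further argument, so your explicit IVT justification at the points $\theta(t_s)=s\pi$ and your flagged caveat about the sign of $\lambda$ actually go beyond what the paper's own proof supplies.
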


\begin{proof} The proof is similar to the one presented for the minimal case. A direct verification shows that $J_{J,0}[r^\prime]$ vanishes and the numbers of zeroes of $r^\prime$ in the interval $[0,mT)$ is  $2m$. By Theorem \ref{zeroes} we conclude that either $0=\tilde{\lambda}(1,2m)$ or $0=\tilde{\lambda}(1,2m+1)$. Therefore the operator $K_{J,1}$ has at least $2m-1$ negative eigenvalues.  Let us analyze the spectrum of the operator $K_{J,2}$.   A direct verification shows that the functions 

$$f_1(t)= \frac{ c^{-\frac{n}{2}} r(t) \left(c^{n/2} (H \cos (\theta (t))-r^\prime(t) \sin (\theta (t)))+r(t)^{-n} \cos (\theta (t))\right)}{\sqrt{1-r(t)^2}}$$

 and 
 
 $$f_2(t)=\frac{ c^{-\frac{n}{2}} r(t) \left(c^{n/2} (r^\prime(t) \cos (\theta (t))+H \sin (\theta (t)))+r(t)^{-n} \sin (\theta (t))\right)}{\sqrt{1-r(t)^2}}$$ 
 
 satisfy the equation $K_{J,2}(f_i)=0$ for  $i=1,2$. The numbers of zeroes $f_1$ and $f_2$ in the interval $[0,mT)$ is at least $2l$. By Theorem \ref{zeroes} we conclude that $0=\tilde{\lambda}(2,2k)=\tilde{\lambda}(2,2k+1)$ where $2k$ is the number of zeroes of $f_1$ and $f_2$ on the interval $[0,mT)$. Recall that $k\ge l$. Therefore the operator $K_{J,2}$ has at least $2l-1$ negative eigenvalues. Since the multiplicity of the eigenvalue $\alpha_2=(n-1)$ of the Laplace operator on $S^{n-1}$ is $n$, we conclude that the number of negative eigenvalue of the operator $J$ is greater than $(2l-1) n+(2m-2)$.
\end{proof}

%THE EXAMPLE

\section{An example that illustrates the method.}

In this section we pick an explicit rotational $3$-dimensional minimal hypersurface $M\subset \mathbb{S}^4$ and we compute the first three eigenvalues of the Laplacian and its stability index. 

\subsection{Construction of the particular example.} By the Intermediate Value Theorem, it is easy to see that there is a value of $c$ near $ac=2.82842479911$ such that the function $r(t)$  has period $T$ near $aT=2.6722005616$ and $\theta(T)=\frac{4\pi}{3}$. See equation (\ref{p}). In this case $l=2$, $m=3$ and  our manifold $M$ is defined by this choice of $c$. Recall that the immersion $\phi:\mathbb{S}^2\times \mathbb{R}\longrightarrow \mathbb{S}^{4}$ is given by 

\begin{eqnarray}
\phi(y,t)=(r(t)\, y,\sqrt{1-r(t)^2} \cos(\theta(t)),\sqrt{1-r(t)^2} \sin(\theta(t)))
\end{eqnarray}

Figure \ref{graphrtheta} shows the solution $r(t)$ that produces the manifold $M$, and Figure \ref{profilen3} shows the profile curve of the rotational manifold $M$.

%image 0
\begin{figure}[hbtp]
\begin{center}\includegraphics[width=.4\textwidth]{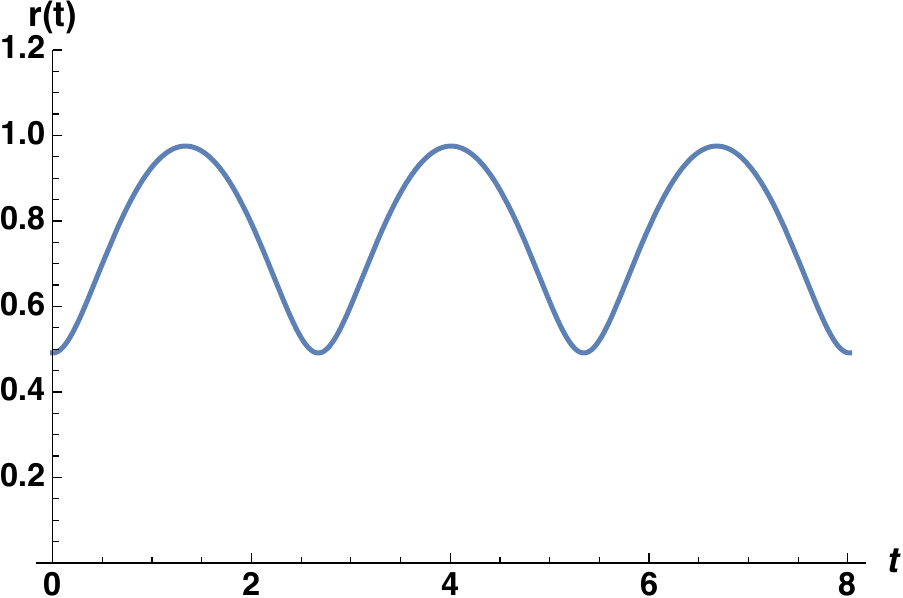} \hskip.25cm \includegraphics[width=.5\textwidth]{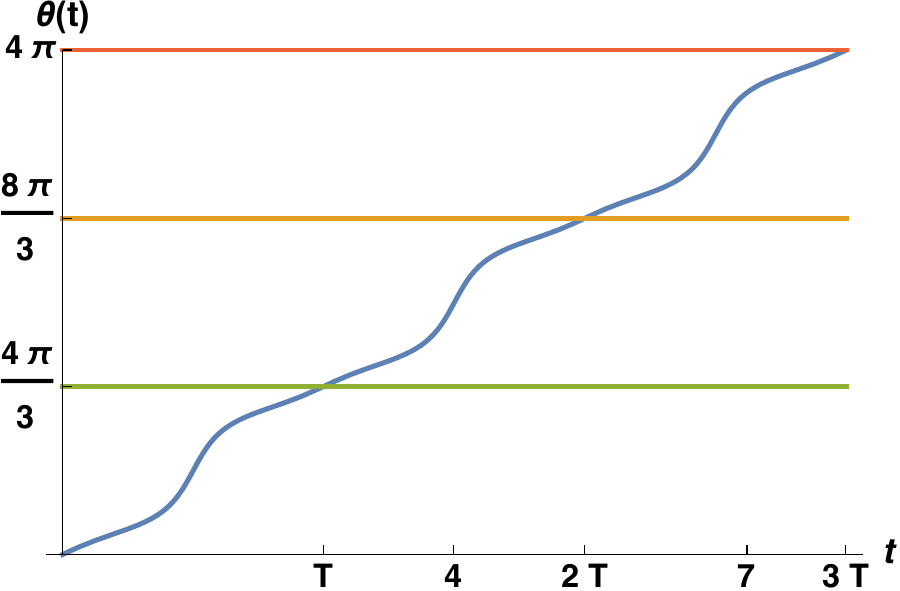}
\caption{On the left we have the graph of the solution $r(t)$ for the value $ac=2.8284247911397589$.  On the right we have the graph of the function $\theta(t)$ defined on Equation (\ref{rel}).}\label{graphrtheta}
\end{center}
\end{figure}
%end image 0

%image 1
\begin{figure}[hbtp]
\begin{center}\includegraphics[width=.5\textwidth]{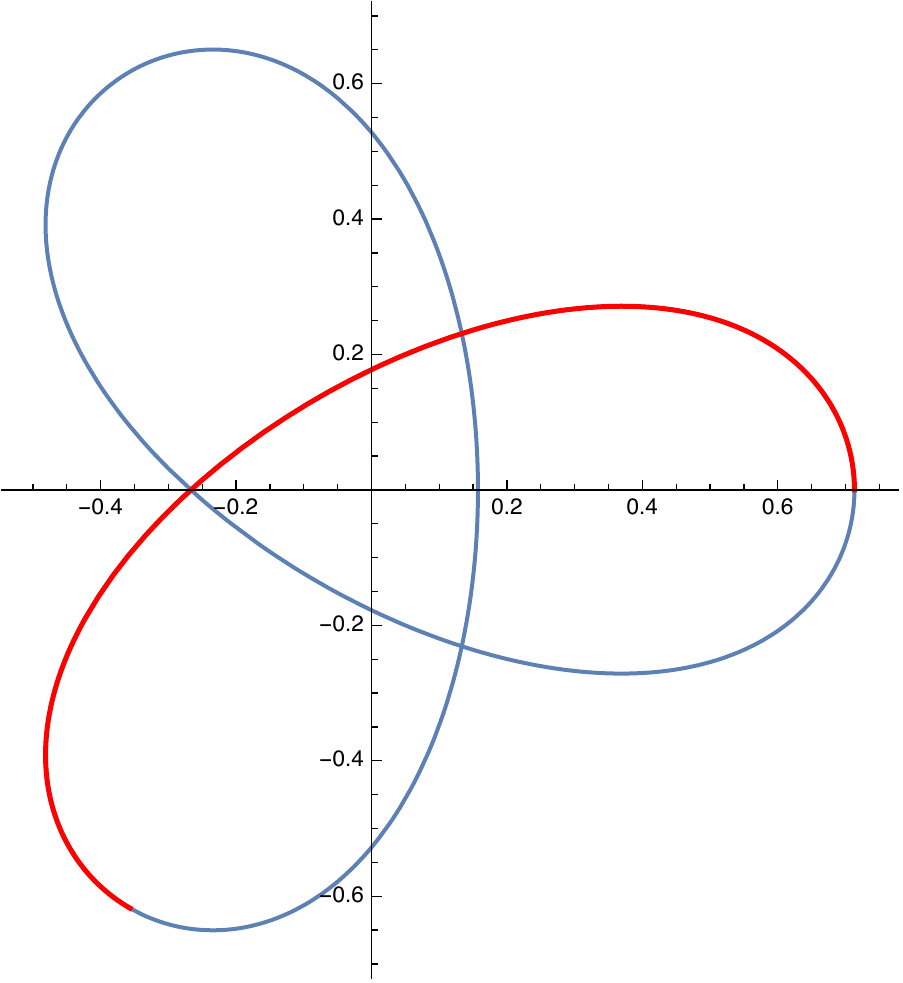} 
\caption{Profile curve of the $M$. This curve is parametrized by $(\sqrt{1-r(t)^2} \cos(\theta(t)),\sqrt{1-r(t)^2} \sin(\theta(t)))$. The red piece represent the curve when $t$ moves from $0$ to $T$. Recall that the period of the immersion is $3T$.}\label{profilen3}
\end{center}
\end{figure}
%end image 1

%subsection: Laplacian.

\subsubsection{Computing the first three eigenvalues of the Laplacian} In order to use the Oscillation Theorem (Theorem \ref{ot}) we notice that making $u=rz$ we obtain  

$$K_{\Delta,k}[z]=z^{\prime\prime}+2\frac{r^\prime}{r} z^\prime - \frac{\alpha_k}{r^2}\,  z =
\frac{1}{r}\left(u^{\prime\prime} + \left(1-\frac{2}{c^3r^6}-\frac{\alpha_k}{r^2}\right)\, u\right)\, .$$

Therefore $\lambda(k,i)$ is an eigenvalue of the operator $K_{\Delta,k}$ if and only if $\lambda(k,i)$ is an eigenvalue of the operator $\bar{K}_{\Delta,k}[u]= u^{\prime\prime} + \left(1-\frac{2}{c^3r^6}-\frac{\alpha_k}{r^2}\right)\, u$. For $\alpha_1=0$, Figure \ref{DeltaAlpha0} shows the discriminant function  $\delta(\lambda)$ for the  operator $\bar{K}_{\Delta,1}$.

%image 2
\begin{figure}[hbtp]
\begin{center}\includegraphics[width=.6\textwidth]{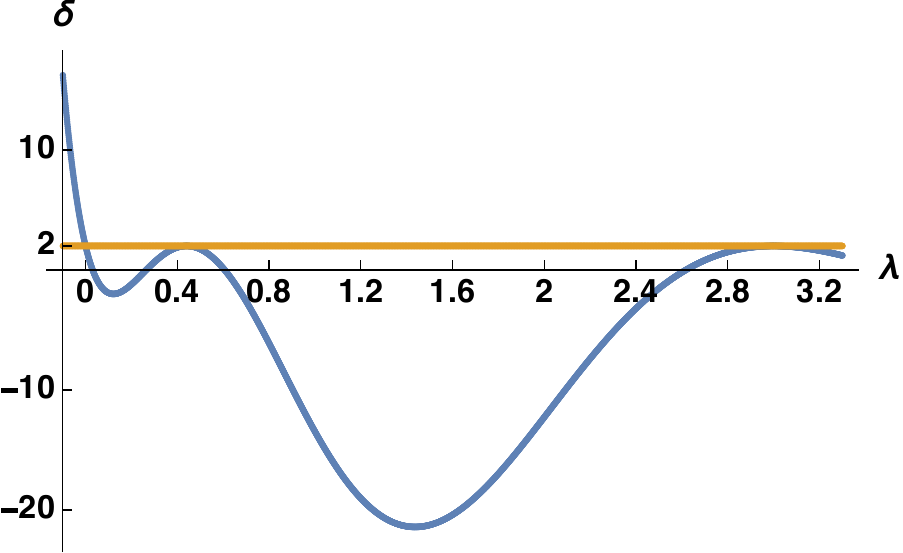} 
\caption{Graph of the function $\delta(\lambda)$. The roots of the equation  $\delta(\lambda)=2$ give us eigenvalues of the Laplacian of the form $\lambda(1,j)$  defined on  Theorem \ref{mt}.}\label{DeltaAlpha0}
\end{center}
\end{figure}
%end image 2

Figure \ref{DeltaAlpha0} was made by taking 3400 values of $\lambda$ between $-0.1$ and $3.3$, one every $0.001$. For each value of $\lambda$ we solve two differential equations to find the functions $z_1(t,\lambda)$ and $z_2(t,\lambda)$ defined in theorem \ref{ot}. Once we have $z_1(t,\lambda)$ and $z_2(t,\lambda)$ we computed $\delta(\lambda)$. The crossing of the graph of $\delta(\lambda)$ with the horizontal line $y=2$ at $\lambda(1,1)=0$ was expected since $z(t)=1$ is an eigenfunction and the crossing at $\lambda=3$ with multiplicity 2 was expected because the last two coordinates of the immersion, the functions $\sqrt{1-r^2}\cos(\theta)$ and $\sqrt{1-r^2}\sin(\theta)$, are eigenfunctions of the Laplacian of $M$, see \cite{S}. Regarding the crossing near $0.44$ we can check that $|\delta(0.4404)-2|$ is smaller than $10^{-6}$. Figure \ref{ef12} shows two linearly independent solutions $\xi_1$ and $\xi_2$ of the equation $K_{\Delta,1}[z]+0.4404 z=0$. All together we have $3$ eigenvalues of $K_{\Delta,1}$ smaller than $3$, which agrees with Theorem \ref{index}.

%image 3
\begin{figure}[hbtp]
\begin{center}\includegraphics[width=.4\textwidth]{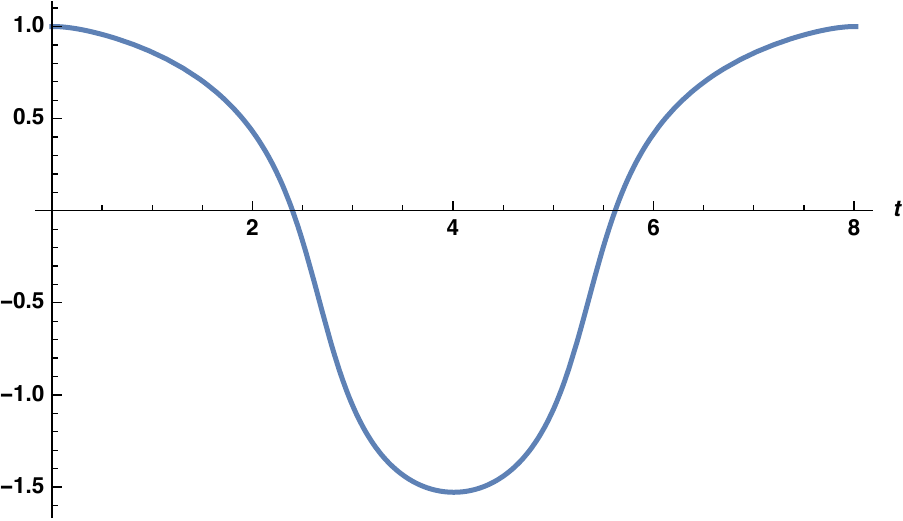}\hskip.9cm \includegraphics[width=.4\textwidth]{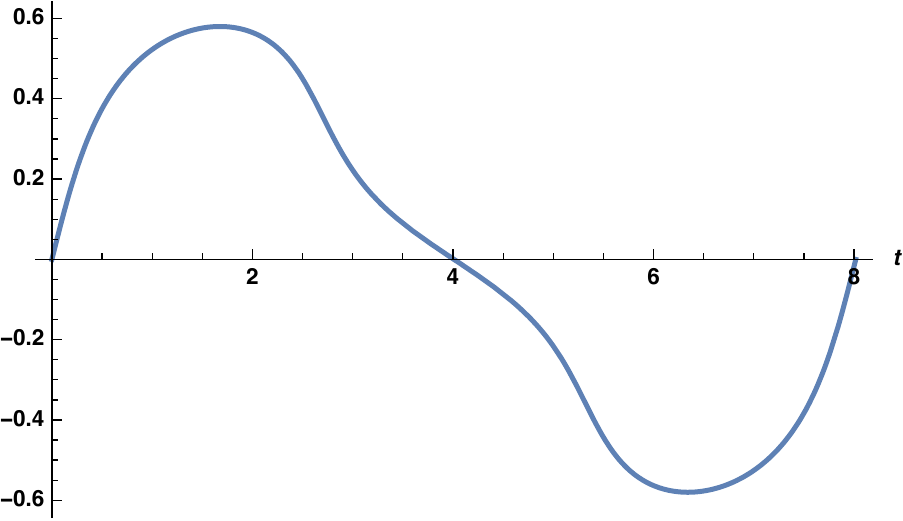} 
\caption{Graph of two solutions  $\xi_1$ and $\xi_2$ of the equation $K_{\Delta,1}[z]+0.4404 z=0$. }\label{ef12}
\end{center}
\end{figure}
%end image 3

We will move now to study the operator  $K_{\Delta,2}$. Since the coordinates of the immersion $\phi$ are eigenfunction of the Laplacian we have that the function $r(t)$ satisfies the equation $K_{\Delta,2}(r(t))=-3 r(t)$. The previous equation also follows from Equation (\ref{soder}). Since $r(t)$ is positive then $\lambda(2,1)=3$ is the first eigenvalue of $K_{\Delta,2}$ and it has multiplicity 1.

\begin{rem}
Since the sequence $\alpha_k$ is increasing then the sequence $\lambda(k,1)$ is also increasing.
\end{rem}

From the previous remark we deduce that all other eigenvalues of the Laplacian of $M$ are greater than $3$. 

%THE EIGENVALUES OF THE JACOBIAN

\subsubsection{Computing the negative eigenvalues of the Jacobi operator} Once again we use the Oscillation Theorem (Theorem \ref{ot}). The change of variables $u=rz$ gives us  

$$K_{J,k}[z]=z^{\prime\prime}+2\frac{r^\prime}{r} z^\prime +\left(\frac{6}{c^3r^6}+3-\frac{\alpha_k}{r^2}\right)\,  z =
\frac{1}{r}\left(u^{\prime\prime} + \left(4+\frac{4}{c^3r^6}-\frac{\alpha_k}{r^2}\right)\, u\right)$$

Similar to the case of the Laplacian operator, we can compute the eigenvalues of the Jacobi operator by computing the eigenvalues of the operator 

$$\bar{K}_{J,k}[u]=
u^{\prime\prime} + \left(4+\frac{4}{c^3r^6}-\frac{\alpha_k}{r^2}\right)\, u\, .$$ 

Figure \ref{JacAlpha0}  shows the discriminant $\delta$ for the  operator $\bar{K}_{J,1}$.  A closer look of the function tell us that the negative solutions of the equation $\delta(\lambda)=2$ are on the intervals $[-8.7,-8.5]$ and $[-3.1, 0]$.

%image 4
\begin{figure}[hbtp]
\begin{center}\includegraphics[width=.3\textwidth]{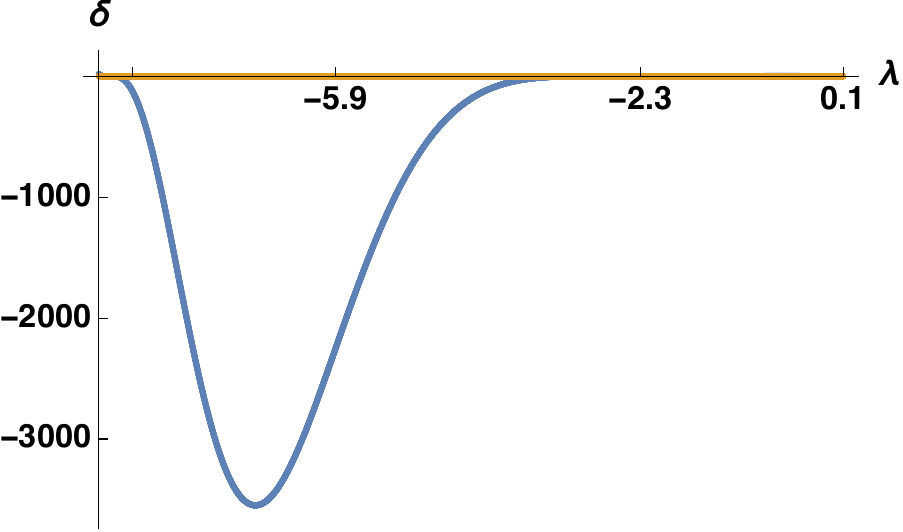} \includegraphics[width=.3\textwidth]{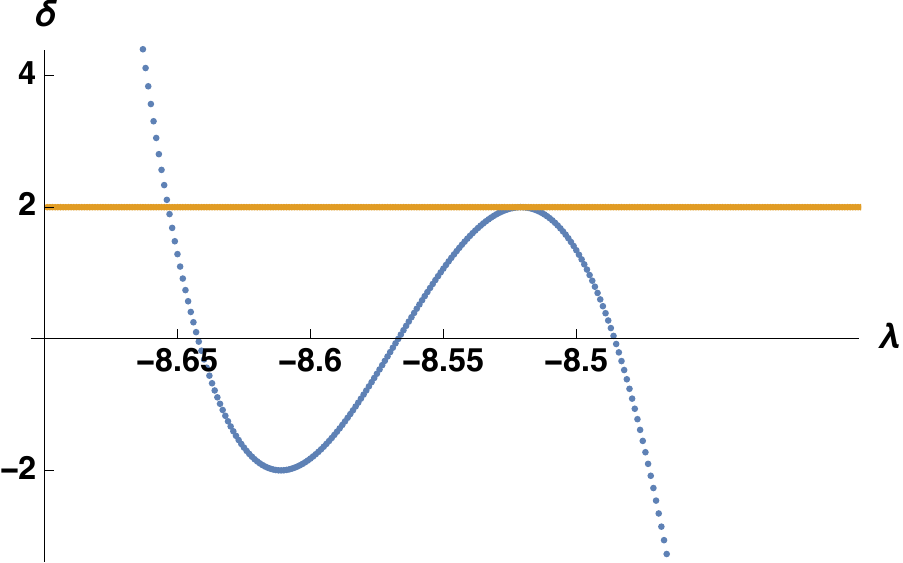}\includegraphics[width=.3\textwidth]{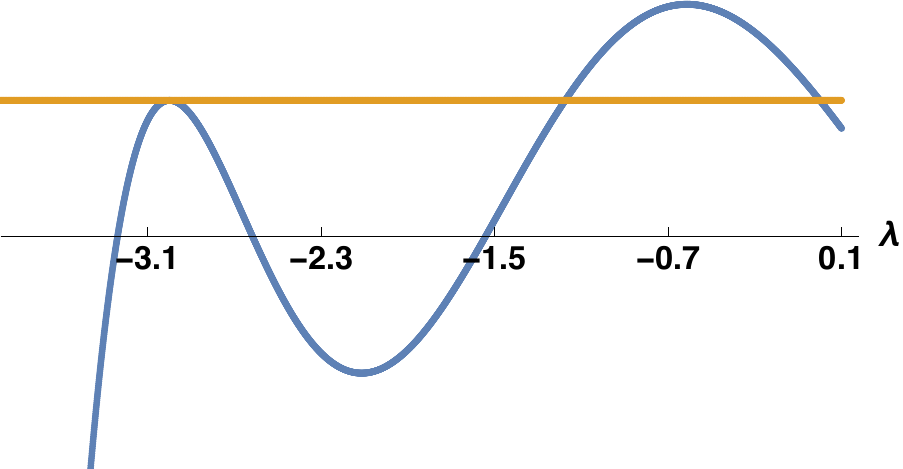} 
\caption{Graph of the function $\delta(\lambda)$ for the operator $\bar{K}_{J,1}$. The roots of the equation  $\delta(\lambda)=2$ give us eigenvalues of the Jacobi operator of the form $\lambda(1,j)$. The graph on the center and on the right shows the function $\delta$ on smaller intervals}\label{JacAlpha0}
\end{center}
\end{figure}
%end image 4

For the first eigenvalue of the Jacobi operator, it is easy to use the Intermediate Value Theorem to show that $\tilde{\lambda}(1,1)=-8.6534$ within an error of $10^{-4}$. This eigenvalue has multiplicity one and Figure \ref{grho} shows a non zero periodic eigenfunction of the operator $K_{J,1}$. For the next value we have that $|\delta(-8.53078)-2|<10^{-5}$. For this value of $\lambda$ the two fundamental solutions of the equation $K_{J,1}[z]+\lambda z=0$ are shown in Figure \ref{f2J}. The next eigenvalue is $-3$ with multiplicity $2$, this eigenvalue was expected due to the fact that the coordinate functions of the Gauss map are eigenfunctions of the Jacobi operator.   For the next eigenvalue we have that $|\delta(-1.1749673)-2|<10^{-5}$. The existence of an eigenvalue near $-1.17496$ with multiplicity one is given by the Intermediate Value Theorem, see Figure \ref{JacAlpha0}. We know that this is the last negative eigenvalue because $0$ is known to be eigenvalue of $K_{J,1}$. 

%image 5
\begin{figure}[hbtp]
\begin{center}\includegraphics[width=.6\textwidth]{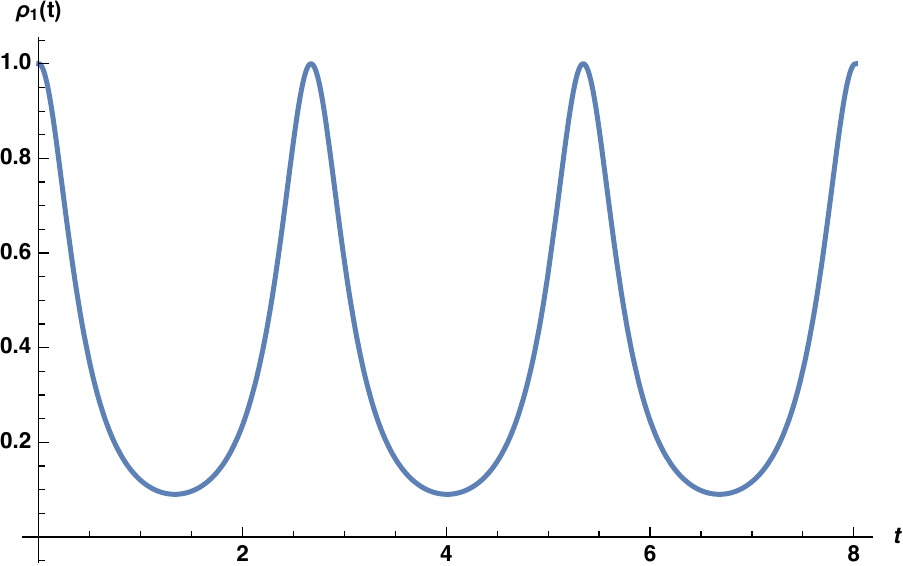} 
\caption{Graph of an eigenfunction associated with $\tilde{\lambda}(1,1)=-8.65...$. This function also represents the first eigenfunction of the stability operator. }\label{grho}
\end{center}
\end{figure}
%end image 5

%image 6  f2J
\begin{figure}[hbtp]
\begin{center}\includegraphics[width=.4\textwidth]{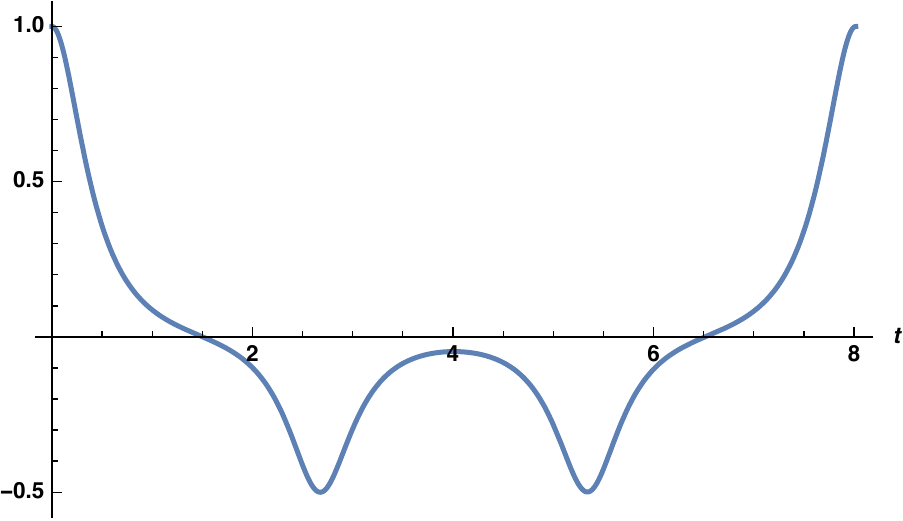}\hskip.4cm\includegraphics[width=.4\textwidth]{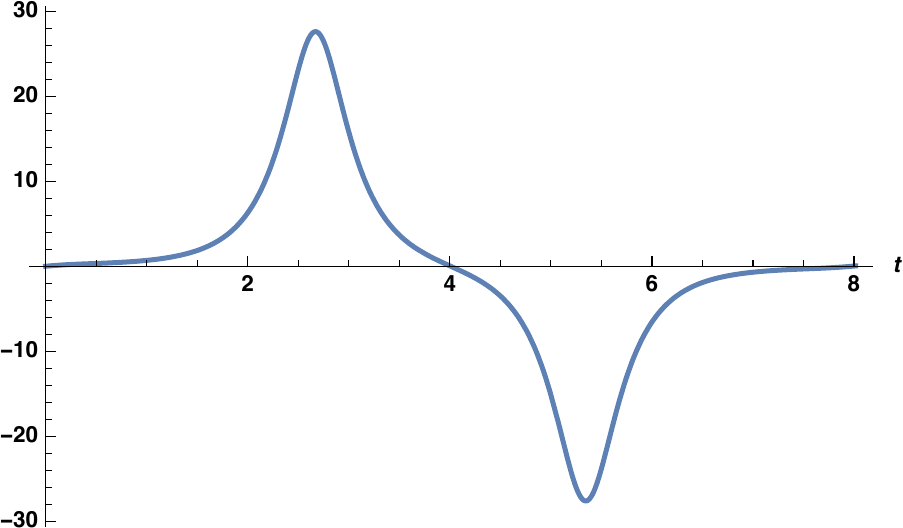} 
\caption{Two solutions of the equation $K_{J,1}[z]-8.53078 z=0$. }\label{f2J}
\end{center}
\end{figure}
%end image 6

We now study the operator $K_{J,2}$. Figure \ref{JacAlpha2} shows the discriminant $\delta$ for the  operator $\bar{K}_{J,2}$. We can directly check that $K_{J,2}(r^{-2})=3 r^{-2}$. Since $r(t)$ is positive, then we have that $-3$ is the first eigenvalue of $K_{J,2}$ with multiplicity one. Since we have that $|\delta(-2.5596)-2|<10^{-5}$, then there is an eigenvalue of $K_{J,2}$ with multiplicity $2$  near $-2.5596$. We can check that the first eigenvalue of the operator $K_{J,3}$ is close to $4.3484453$. Therefore we have gotten all negative eigenvalues of the Jacobi operator, in summary we have

\begin{rem}
Since the first two  eigenvalues of $\mathbb{S}^2$ are $0$ with multiplicity 1 and $2$ with multiplicity $3$, then the stability index of $M$ is 15. Counting with multiplicity we have that the first eigenvalue of the stability operator $J$ on $M$ is near $-8.6534$ and has multiplicity one. We have two eigenvalues near $-8.52078$, it could be only one with multiplicity 2. We have $-3$ with multiplicity $5$. Even though this was known, it is interesting to point out that the multiplicity is $5$ because $-3$ is an eigenvalue with multiplicity 2 of $K_{J,1}$ and $-3$ is an eigenvalue of multiplicity $1$ of $K_{J,2}$, this multiplicity one needs to be multiply by 3 because the eigenvalue  $\alpha_2=2$ of the Laplace operator on $\mathbb{S}^2$ has multiplicity 3. The next eigenvalues are six near $-2.5596$, they are either two with multiplicity 3 or one with multiplicity 6. The last negative eigenvalue is one near $-1.17496$.
\end{rem}

%image 6  JacAlpha 3
\begin{figure}[hbtp]
\begin{center}\includegraphics[width=.6\textwidth]{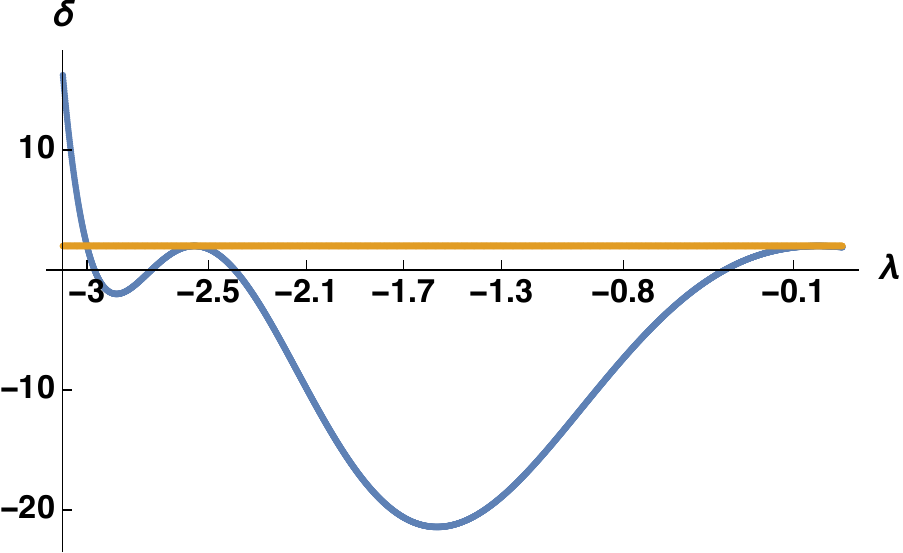}
\caption{Graph of the discriminant of the operator $K_{J,2}$ }\label{JacAlpha2}
\end{center}
\end{figure}
%end image 6

%BIBLIOGRAPHY

\end{document}